\definecolor{green}{rgb}{0.0, 0.5, 0.0}
\definecolor{purple}{rgb}{0.5, 0.0, 0.5}
\definecolor{bluegreen}{rgb}{0.0,0.5, 0.5}
\definecolor{orange}{rgb}{1,0.5, 0.1}
\definecolor{redgreen}{rgb}{0.5, 0.5, 0.0}
\newcommand{\Q}{{\mathbb Q}}
\newcommand{\C}{{\mathbb C}}
\newcommand{\Z}{{\mathbb Z}}
\newcommand{\R}{{\mathbb R}}
\renewcommand{\P}{{\mathbb P}}
\newcommand{\OO}{{\mathcal O}}
\renewcommand{\b}{{\mathbb B}}
\renewcommand{\a}{\mathrm{a}}
\renewcommand{\b}{\mathrm{b}}
\renewcommand{\c}{\mathrm{c}}
\newcommand{\rank}{\mathrm{rank}}
\newcommand{\ord}{\mathrm{ord}}
\title{Rational Hyperbolic Triangles and a Quartic Model of Elliptic Curves}
\author{Nicolas Brody and Jordan Schettler}
\begin{document}

\allowdisplaybreaks

 \setcounter{tocdepth}{1}
\newtheorem{thm}{Theorem}
\newtheorem{conj}[thm]{Conjecture}
\newtheorem{prop}[thm]{Proposition}
\newtheorem{lemma}[thm]{Lemma}
\newtheorem{corollary}[thm]{Corollary}
\newtheorem*{fact}{Fact}
\theoremstyle{remark}
\newtheorem{rem}[thm]{Remark}
\theoremstyle{definition}
\newtheorem{defn}[thm]{Definition}
\newtheorem*{goal}{Goal}
\newtheorem{assume}{Assumption}
\renewcommand{\theassume}{\Alph{assume}}
\newtheorem{exam}[thm]{Example}
\numberwithin{equation}{section}

\begin{abstract}
The family of Euclidean triangles having some fixed perimeter and area can be identified with a subset of points on a nonsingular cubic plane curve, i.e., an elliptic curve; furthermore, if the perimeter and the square of the area are rational, then the curve has rational coordinates and those triangles with rational side lengths correspond to rational points on the curve. We first recall this connection, and then we develop hyperbolic analogs. There are interesting relationships between the arithmetic on the elliptic curve (rank and torsion) and the family of triangles living on it. In the hyperbolic setting, the analogous plane curve is a quartic with two singularities at infinity, so the genus is still $1$. We can add points geometrically by realizing the quartic as the intersection of two quadric surfaces. This allows us to construct nontrivial examples of rational hyperbolic triangles having the same inradius and perimeter as a given rational right hyperbolic triangle.
\end{abstract}

\maketitle

\section{Introduction}

Connections between families of triangles and elliptic curves have been long studied.
Consider, for example, the famous congruent number problem: What rational numbers $A$
are the area of a right triangle with rational side lengths?
We call such $A$ \emph{congruent numbers}.
Using similar triangles, we may assume $A$ is a squarefree positive integer. For example, 6 is a congruent number since it is the area of
the $(3,4,5)$-right triangle. Euler showed that $7$ is a congruent number, but
$1$, $2$, and $3$, are not. It turns out that a 
positive rational number $A$ is a congruent number if and only if the elliptic curve $y^2 = x^3 - A^2x$
has a rational point in the first quadrant; in fact, since the the only torsion
points on this elliptic curve have order dividing two, $A$ being congruent here is
equivalent to the group of rational points on the curve having positive rank.
More generally, one can ask whether $A$
is the area of a \emph{rational triangle} (i.e., a triangle having rational side lengths) one of whose internal angles is some fixed value $\theta \in(0,\pi)$. Such numbers 
correspond to the existence of a certain kind of rational point on elliptic curves of the form $y^2 = x(x-A\lambda)(x+A/\lambda)$
where $\lambda = \sin(\theta)/(\cos(\theta) +1)$ as found in Problem 3, Section 2, Chapter I of \cite{Kobl}.

Similarly, one can consider rational numbers $A$ which are the area of a rational triangle having some
fixed perimeter instead of a fixed internal angle. 
Again, the existence of such a triangle corresponds to the existence of a rational point on an elliptic curve,
namely, a rational point in the first quadrant
on the curve $s^2xy-A^2=sxy(x+y)$ where $s$ is the \emph{semiperimeter} defined to be half the perimeter.
We will recall this connection in Section \ref{sectone}.
Note that the area $A$ of a triangle is determined via Heron's formula $A = rs$ where $r$ is the inradius, so we are led
to study rational points on the curves $C_{r,s} \colon s(xy-r^2)=xy(x+y)$, which are
similar to those studied in \cite{Camp} and/or \cite{Goin}. From the equation for $C_{r,s}$, it
is clear that similar triangles do, in fact, define isomorphic curves and that the family of curves
can be parameterized by $k = s/r$. We will keep the parameters $r$, $s$ separate, however,
to motivate the hyperbolic analogs which have no chance of exploiting similarity since
similar hyperbolic triangles are actually congruent.

In general, any triangle gives rise to a point on $C_{r,s}$ in the first quadrant where
$r$ is the triangle's inradius and $s$ is its semiperimeter. (Here $s\geq 3\sqrt{3}r >0$, and conversely, for any pair of real numbers $r,s$ satisfying this
inequality, there is a triangle with these parameters.) Such a curve $C_{r,s}$ will be an elliptic curve provided the
triangle we started with was not equilateral or, equivalently, $s>3\sqrt{3}r$. Moreover, if the given triangle is rational, then
the point will have rational coordinates and $C_{r,s}$ will have rational coefficients since here $A^2\in \Q$
even though $A$ itself is not necessarily rational.

In particular, when $s,r^2\in \Q$ and $s> 3\sqrt{3}r >0$, one can ask questions about how the structure of the Mordell-Weil group
$C_{r,s}(\Q)$ is related to the family of rational triangles with parameters $r,s$. For example, ``When do the points coming from rational triangles have infinite order?''
It turns out that many Pythagorean triples give rise to rational points of infinite order (see Theorem \ref{rankthm} below), a fact
which comes from the observation that points corresponding to triangles cannot have odd order. There can be points of even order
coming from triangles, and such a point will have order $2$ or $6$ if and only if the corresponding triangle is isosceles.

We can also play the same game for hyperbolic triangles. The natural curve we get for hyperbolic triangles of a fixed inradius $r$ and fixed semiperimeter $s$, however, is a quartic curve
\begin{align*}
Q_{\rho,\sigma}\colon \sigma(x^2y^2+xy + \rho^2(x^2 + xy + y^2-1)) = (1+\rho^2)(x^2y+xy^2)
\end{align*}
where $\rho = \sinh(r)$ and $\sigma=\tanh(s)$. Again, this will be an elliptic curve in the non-equilateral case (i.e., $\sigma > 3\sqrt{3}\rho\frac{1+\rho^2}{1+9\rho^2}$). Also, certain right triangles generate rational points on curves with rational coefficients. In fact, there is a hyperbolic Pythagorean theorem: for a right hyperbolic triangle
with legs $\ell_1$, $\ell_2$ and hypotenuse $\ell_3$, we have
\begin{align*}
\tilde{\ell}_3^2 = \tilde{\ell}_1^2+\tilde{\ell}_2^2-\tilde{\ell}_1^2\tilde{\ell}_2^2
\end{align*}
where, in general, $\tilde{u} = \tanh(u)$ for $u\in \R$. This motivates our definition of a
\emph{rational hyperbolic triangle} to be one whose side lengths $\ell_1$, $\ell_2$, $\ell_3$
have rational hyperbolic tangents $\tilde{\ell}_1$, $\tilde{\ell}_2$, $\tilde{\ell}_3$. Note, this notion
of a rational hyperbolic triangle is more general than that used by Hartshorne
and van Luijk in \cite{Hart2} where the authors consider the more restrictive condition that the side lengths are natural logs of rational numbers.
In contrast, our definition of a rational hyperbolic triangle is equivalent to requiring its side lengths to be natural logs of square roots of
rational numbers. In particular, the examples of hyperbolic Pythagorean triples derived by Hartshorne
and van Luijk are rational hyperbolic triangles under our definition, and in this case both $\sigma$ and $\rho^2$ are rational.


\section{Euclidean Triangles and Cubic Curves}\label{sectone}

Consider a Euclidean triangle $\Delta$. The angle bisectors are concurrent in a point called the incenter. Furthermore, the distances from the sides of $\Delta$ to the incenter are all equal, so this determines an inscribed circle, called the incircle. We define the inradius $r$ to be the radius of the incircle. The inradii joining the incenter to the three intersection points of the incircle with $\Delta$ determine three central angles $\alpha$, $\beta$, $\gamma$, and three partial side lengths $\a$, $\b$, $\c$, as in Figure \ref{E} below.
\begin{figure}
\begin{center}
\includegraphics[scale=0.23]{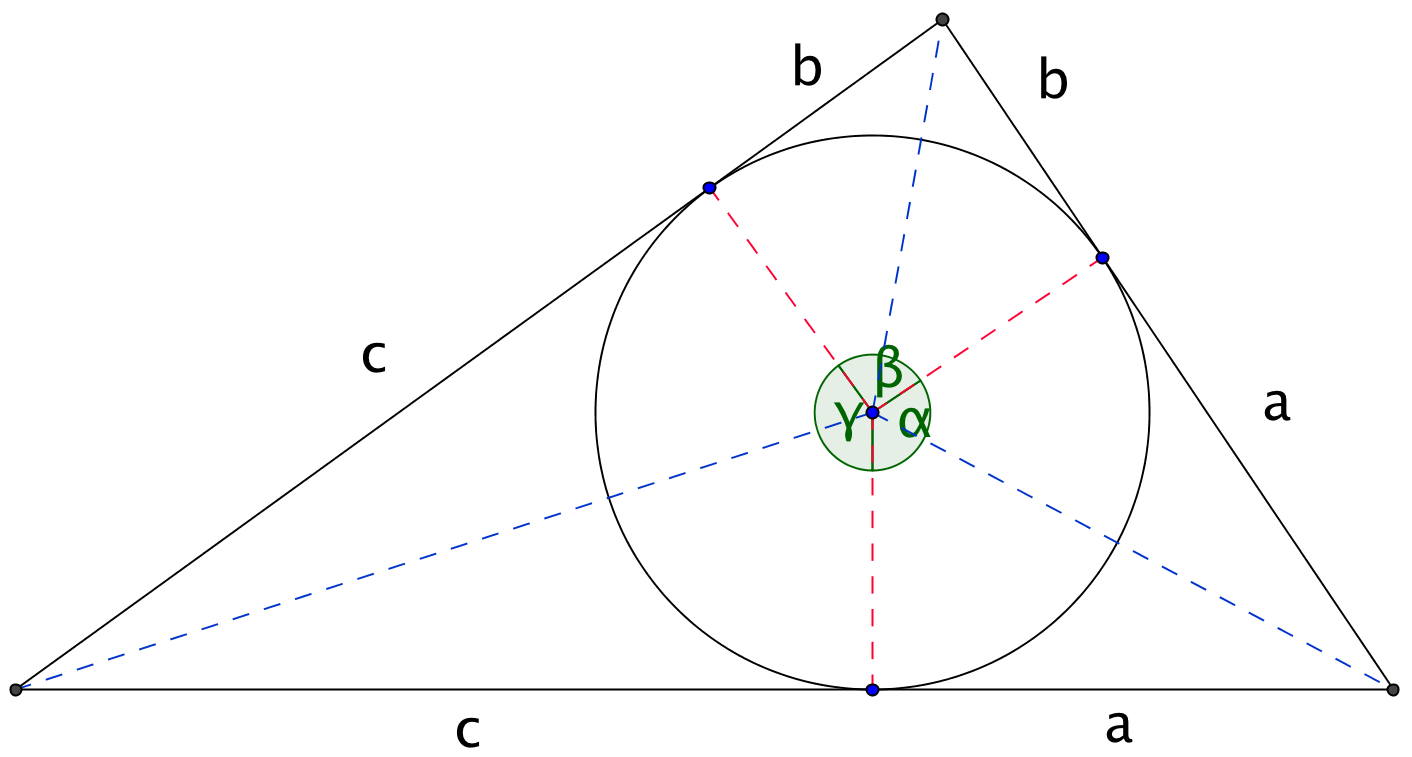}
\end{center}
\caption{Euclidean Triangle with Angle Bisectors, Incircle, and Inradii}
\label{E}
\end{figure}
The angle bisectors also bisect the central angles, so
\begin{align*}
\a = r\tan(\alpha/2) && \b = r\tan(\beta/2) && \c = r\tan(\gamma/2).
\end{align*}
Hence the Triple Tangent Identity (see \ref{trip} in Appendix \ref{A}) implies
\begin{align*}
\frac{\a\b\c}{\a+\b+\c} = r^2,
\end{align*}
so
\begin{align*}
\c = \frac{r^2(\a+\b)}{\a\b-r^2}.
\end{align*}
Thus the semiperimeter $s$ (half the perimeter) of $\Delta$ satisfies
\begin{align*}
s = \a + \b + \c = \a + \b +  \frac{r^2(\a+\b)}{\a\b-r^2},
\end{align*}
or equivalently
\begin{align}\label{cubic}
s(\a\b-r^2) = \a^2\b+\a\b^2.
\end{align}
One can show that $s\geq 3\sqrt{3}r$ with equality only for an equilateral triangle. Suppose $\Delta$ has side lengths $\ell_1,\ell_2,\ell_3$. Then, without loss of generality,
\begin{align*}
\a + \b=\ell_1  &&  \b+ \c = \ell_2 &&  \c+ \a = \ell_3,
\end{align*}
so
\begin{align}\label{one}
\a = \frac{\ell_1-\ell_2+\ell_3}{2} && \b = \frac{\ell_1+\ell_2-\ell_3}{2} && \c = \frac{-\ell_1+\ell_2+\ell_3}{2}.
\end{align}
Therefore Eq. \ref{cubic} implies that we get a point
\begin{align*}
(\a,\b) = \left(s-\ell_2, s-\ell_3\right)
\end{align*}
on the cubic curve 
\begin{align}\label{eq}
C_{r,s}\colon s(xy-r^2) = x^2y+xy^2 
\end{align}
where
\begin{align}\label{two}
s=\frac{\ell_1+\ell_2+\ell_3}{2}, && r^2 = \frac{(s-\ell_1)(s-\ell_2)(s-\ell_3)}{s}.
\end{align}
The graph of $C_{r,s}$ in the affine real plane has four components (see Figure \ref{crs}), one in each quadrant.
The point $(\a,\b)$ from $\Delta$ is, of course, in the first quadrant. 
\begin{figure}
\begin{center}
\includegraphics[width=12 cm]{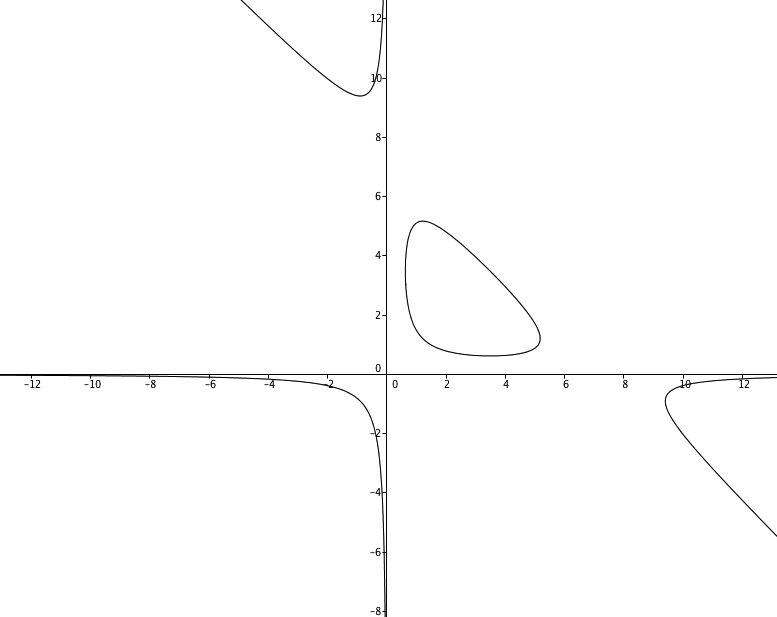}
\end{center}
\caption{Graph of $C_{r,s}$ in the affine real plane}
\label{crs}
\end{figure}

In fact, we get up to six points on $C_{r,s}$ coming from $\Delta$ (see Figure \ref{sixpoints}), namely, $(\a,\b)$, $(\b,\c)$, $(\c,\a)$ and the transposes $(\b,\a)$, $(\c,\b)$, $(\a,\c)$.
\begin{figure}
\begin{center}
\includegraphics[width=6.5 cm]{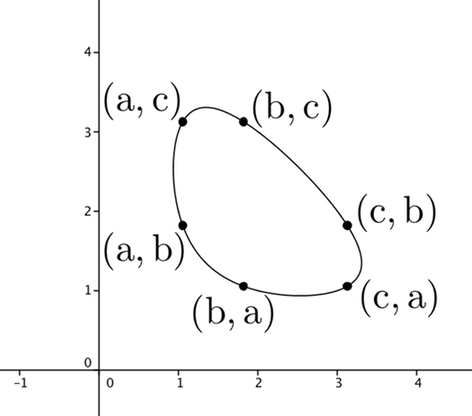}
\end{center}
\caption{Six points on $C_{r,s}$ coming from a triangle}
\label{sixpoints}
\end{figure}
We homogenize Eq. \ref{eq} to view $C_{r,s}$ as a curve in $\P^2(\C)$ with projective coordinates $[x,y,z]$; it is nonsingular provided that $s>3\sqrt{3}r$ (i.e., $\Delta$ is non-equilateral) and has the following three points at infinity: $[1,0,0]$, $[0,1,0]$, and $[1,-1,0]$. There is a natural group law on the points of $C_{r,s}$ which can be interpreted geometrically via the usual tangent/chord construction: for points $P$, $Q$, we take $P+Q$ to be the reflection about the line $y=x$ of the third intersection point $P\ast Q$ of $C_{r,s}$ with the secant/tangent line for $P$, $Q$. In particular, $[1,-1,0]$ is the identity element and $[1,0,0]$, $[0,1,0]$ both have order three with $[1,0,0]+[0,1,0]=[1,-1,0]$. For the affine point $(\a,\b)$ as above, we have $-(\a,\b) = (\b,\a)$, $(\a,\b)+[1,0,0] = (\b,\c)$ and similarly $(\a,\b)+[0,1,0] = (\c, \a)$. If $\Delta$ is isosceles (e.g., when $\a=\b$), then we get three corresponding points on $C_{r,s}$, one of which has order two.
In fact, there are always three real points on $C_{r,s}$ of order two when $s>3\sqrt{3}r$, and exactly two of those points come from distinct isosceles triangles.

There is an easily determined Weierstrass form which gives us a different, but related, structure of an elliptic curve in the non-equilateral case, namely,
\begin{align*}
E_{r,s}\colon Y^2 + sXY+sr^2Y = X^3.
\end{align*}
Explicitly,
\begin{align*}
(X,Y) = \left(-\frac{sr^2}{x}, \frac{sr^2y}{x} \right),
\end{align*}
which we can view as a map in projective coordinates
\begin{align*}
C_{r,s} \longrightarrow E_{r,s} \colon [x,y,z] \mapsto [-sr^2z, sr^2y, x].
\end{align*}
This transformation maps the points of order three $[0,1,0]$ and $[1,0,0]$ for $C_{r,s}$ to the identity $[0,1,0]$ and the point of order three $(0,0)$, respectively, for $E_{r,s}$ and maps the identity $[1,-1,0]$ for $C_{r,s}$ to the point of order three $(0, -sr^2) = -(0,0)$ in $E_{r,s}$. The curves $C_{r,s}$ are very natural and symmetric, but the curves $E_{r,s}$ are better suited for computer computations because of their simple Weierstrass equations.

We say that $\Delta$ is \emph{rational} if $\ell_1,\ell_2,\ell_3\in\Q$. In this case, Eq.s \ref{one} and \ref{two} imply $\a,\b,\c,r^2\in\Q$, so both $C_{r,s}$ and $E_{r,s}$ have coefficients in $\Q$ and $(\a,\b)$ is rational point on $C_{r,s}$ which maps to a rational point $(-sr^2/\a, sr^2\b/\a)$ on $E_{r,s}$; thus we can exploit the group structures on $C_{r,s}(\Q)$ and $E_{r,s}(\Q)$ to generate other triangles with rational side lengths having the same semiperimeter $s$ and inradius $r$.
\begin{thm}
Suppose $s,r^2\in\Q$ with $s\geq 3\sqrt{3}r > 0$. Then the assignment
\begin{align*}
(x_0,y_0) \mapsto (x_0+y_0, s-x_0, s-y_0)
\end{align*}
is a bijection from the rational points $(x_0,y_0)$ on $C_{r,s}$ having $x_0, y_0 > 0$ to the sequences of side lengths of rational Euclidean triangles having semiperimeter $s$ and inradius $r$.
\end{thm}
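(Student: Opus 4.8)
The plan is to produce an explicit inverse and verify it is two-sided, which dispatches injectivity and surjectivity simultaneously. Write $\varphi$ for the map in the statement and set $\psi(\ell_1,\ell_2,\ell_3) := (s-\ell_2,\,s-\ell_3)$. Once both maps are known to be well defined, the two compositions collapse by one line of algebra: $\psi\varphi(x_0,y_0) = (s-(s-x_0),\,s-(s-y_0)) = (x_0,y_0)$, and $\varphi\psi(\ell_1,\ell_2,\ell_3) = (2s-\ell_2-\ell_3,\,\ell_2,\,\ell_3) = (\ell_1,\ell_2,\ell_3)$ since $\ell_1+\ell_2+\ell_3 = 2s$. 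So the theorem reduces to the two well-definedness assertions.

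For $\psi$ this is essentially the derivation preceding the theorem: starting from a rational triangle with semiperimeter $s$ and inradius $r$, the numbers $s-\ell_2$ and $s-\ell_3$ are rational and strictly positive by the triangle inequality, and Eq.~\ref{cubic} (with $r^2$ as in Eq.~\ref{two}) shows that the point $(\a,\b) = (s-\ell_2,s-\ell_3)$ lies on $C_{r,s}$. So I would simply cite that computation.

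The substantive direction is $\varphi$. The key algebraic step is to rewrite the defining equation of $C_{r,s}$ as $xy(s-x-y) = sr^2$. Given a rational point $(x_0,y_0)$ on $C_{r,s}$ with $x_0,y_0>0$, I would first note that $sr^2>0$ (since $s\geq 3\sqrt3\,r>0$ forces $r>0$), so $x_0y_0(s-x_0-y_0) = sr^2 > 0$ forces $s-x_0-y_0>0$. Setting $\ell_1 = x_0+y_0$, $\ell_2 = s-x_0$, $\ell_3 = s-y_0$, the three quantities $\ell_2+\ell_3-\ell_1 = 2(s-x_0-y_0)$, $\ell_1+\ell_3-\ell_2 = 2x_0$, $\ell_1+\ell_2-\ell_3 = 2y_0$ are all strictly positive, as are the $\ell_i$ themselves, so these numbers really are the side lengths of a nondegenerate Euclidean triangle $\Delta$; it is rational because $x_0,y_0,s\in\Q$, and its semiperimeter is $s$ by inspection. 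For the inradius $r_\Delta$ of $\Delta$, I would combine Heron's formula with $\mathrm{area}(\Delta) = r_\Delta s$ to obtain $r_\Delta^2 = (s-\ell_1)(s-\ell_2)(s-\ell_3)/s = (s-x_0-y_0)\,x_0 y_0/s$, which equals $r^2$ by the rearranged curve equation; since $r_\Delta$ and $r$ are both positive, $r_\Delta = r$.

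I do not expect a serious obstacle: every step is a short computation. The one place that wants care is the positivity input $s-x_0-y_0>0$, since it simultaneously forces the output triple to satisfy all three strict triangle inequalities (hence to be a genuine triangle, not a formal triple) and, after the Heron computation, pins down the sign so that $r_\Delta = r$ rather than merely $r_\Delta^2 = r^2$. Note finally that nothing in the argument uses nonsingularity of $C_{r,s}$, so the degenerate (equilateral) case $s = 3\sqrt3\,r$ is covered with no extra effort.
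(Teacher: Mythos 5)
Your proof is correct and follows essentially the same route as the paper: surjectivity by citing the construction preceding the theorem, injectivity via the obvious inverse, and the substantive step being that a positive rational point yields a genuine triangle, with the key inequality $s-x_0-y_0>0$ extracted from the curve equation (the paper phrases it as $s/(x_0+y_0)=x_0y_0/(x_0y_0-r^2)>1$, which is the same rearrangement). Your version is slightly more explicit than the paper's in verifying via Heron's formula that the resulting triangle has inradius exactly $r$, a point the paper leaves implicit.
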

\begin{proof}
We have already seen from the above construction that sequences of side lengths of rational triangles give rise to rational points on $C_{r,s}$ in the first quadrant; in particular, the map is onto. Conversely, suppose $(x_0,y_0)$ is a rational point on $C_{r,s}$ in the first quadrant. Then we claim $(x_0+y_0, s-x_0, s-y_0)$ is a sequence of side lengths of a triangle. It suffices to check the inequalities
\begin{align*}
(x_0+y_0) + (s-x_0) = s+y_0 > s-y_0 \\
(x_0+y_0) + (s-y_0) = s+x_0 > s-x_0 \\
(s-x_0)+(s-y_0) = 2s - (x_0+y_0) > x_0 + y_0 
\end{align*}
where the last inequality follows from
\begin{align*}
\frac{s}{x_0+y_0} = \frac{x_0y_0}{x_0y_0-r^2} > 1.
\end{align*}
It remains only to note that $(x_0+y_0, s-x_0, s-y_0) = (x_0'+y_0', s-x_0', s-y_0') \Rightarrow (x_0, y_0) =(x_0', y_0')$.
\end{proof}
In light of the above result, we say that a point on $C_{r,s}$ is a \emph{triangle point} if it lies in the first quadrant of the affine real plane.
\begin{thm}
The sum of two triangle points on $C_{r.s}$ is not a triangle point, and the sum of a triangle point with a non-triangle point is a triangle point. It follows that the sum
of an odd (resp. even) number triangle points is (resp. is not) a triangle point.
\end{thm}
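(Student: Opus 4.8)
The plan is to identify the triangle points with a connected component of the real locus $C_{r,s}(\R)$ and then read off all three assertions from the group structure of $\pi_0$. Assume throughout that $s>3\sqrt{3}r$, so that $C_{r,s}$ is an elliptic curve (the equilateral case is vacuous for this statement).

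First I would show that the set $F$ of triangle points is a connected component of $C_{r,s}(\R)$. It is open in $C_{r,s}(\R)$, being cut out by the open conditions $x>0$, $y>0$. To see that it is also closed, note that on any affine point of $C_{r,s}$ the relation $s(xy-r^2)=xy(x+y)$ together with $x,y>0$ forces $xy>r^2>0$, hence $x+y=s\bigl(1-r^2/(xy)\bigr)\in(0,s)$ and so $0<x<s$ and $0<y<s$; thus $F$ is bounded, its closure in $\P^2(\R)$ contains no point at infinity, and the curve has no affine point with a vanishing coordinate (setting $x=0$ in the equation gives $sr^2=0$, impossible). Hence $F$ is clopen and nonempty. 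Since $C_{r,s}(\R)$, being the real locus of an elliptic curve, has at most two connected components, and since the identity $[1,-1,0]$ lies at infinity and so is not in $F$, it follows that $F$ is exactly one component and $C_{r,s}(\R)=F\sqcup E_\infty$ with $E_\infty:=C_{r,s}(\R)\setminus F$ the identity component.

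Next, because the group operation on $C_{r,s}(\R)$ is continuous, $E_\infty$ is a subgroup of index $2$ and the quotient map $\varepsilon\colon C_{r,s}(\R)\to C_{r,s}(\R)/E_\infty\cong\Z/2\Z$ is a homomorphism sending a point to $1$ exactly when it is a triangle point. The claims are now arithmetic in $\Z/2\Z$: for triangle points $P$, $Q$ one has $\varepsilon(P+Q)=1+1=0$, so $P+Q$ is not a triangle point; for a triangle point $P$ and a non-triangle point $Q$ one has $\varepsilon(P+Q)=1+0=1$, so $P+Q$ is a triangle point; and an easy induction yields $\varepsilon(P_1+\cdots+P_n)=n\bmod 2$ for triangle points $P_i$, which is precisely the parity statement. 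Restricting to rational points gives the same conclusions over $\Q$.

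The main obstacle is the first step — making the component count rigorous — and its crux is the pair of inequalities $xy>r^2$ and $0<x+y<s$ on the first-quadrant locus, which confine the ``egg'' strictly inside the open first quadrant and away from the three points at infinity; Figure \ref{crs} already displays exactly this picture. Everything downstream is formal group theory.
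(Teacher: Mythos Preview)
Your proof is correct, but it takes a genuinely different route from the paper's.

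The paper argues geometrically: the first-quadrant piece of $C_{r,s}(\R)$ is the boundary of a convex region, so every line meets it in exactly $0$ or $2$ points (counting multiplicity). Feeding this directly into the chord--tangent description of the group law (third intersection with the secant/tangent line, then reflection across $y=x$, which preserves the first quadrant) gives both base cases at once; the parity statement is then a bare induction.

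You instead identify the triangle locus $F$ with the non-identity connected component of the real Lie group $C_{r,s}(\R)$ and read the statement off from the quotient $\pi_0\cong\Z/2\Z$. Your boundedness and axis-avoidance computations ($xy>r^2$, $0<x+y<s$, and no affine points with $x=0$ or $y=0$) correctly show $F$ is clopen; combined with the standard fact that a smooth real genus-one curve has at most two components and that $[1,-1,0]\notin F$, this pins $F$ down as a single component and forces the two-component case. The parity homomorphism $\varepsilon$ then does all the work.

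The trade-offs: the paper's convexity argument is more elementary and self-contained, needing nothing about the global topology of real elliptic curves. Your approach is cleaner structurally---it never touches convexity or the explicit chord--tangent recipe---and it transports verbatim to the hyperbolic quartic $Q_{\rho,\sigma}$ once one knows its $(0,1)\times(0,1)$ component is likewise the non-identity oval, whereas convexity would have to be re-verified there. One small point worth making explicit in your write-up: the ``at most two components'' fact is about the \emph{projective} real locus, and the paper's Figure~\ref{crs} shows four affine pieces; you are implicitly using that the three unbounded affine pieces are joined through the three points at infinity into the single identity circle.
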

\begin{proof}
This follows from the observation that the component of $C_{r,s}$ in the first quadrant of the affine real plane is the boundary of a convex region.
In particular, any line intersecting this component will intersect in exactly two points counting multiplicity.
\end{proof}
Since the identity $[1,-1,0]$ on $C_{r,s}$ is not a triangle point, we immediately obtain the following.
\begin{corollary}\label{odd}
A triangle point on $C_{r,s}$ cannot have odd order.
\end{corollary}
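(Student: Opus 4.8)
The plan is to argue by contradiction, using the preceding theorem as a black box. Suppose toward a contradiction that $P$ is a triangle point on $C_{r,s}$ of odd order $n$. The degenerate case $n=1$ is immediate: then $P$ is the identity $[1,-1,0]$, which is a point at infinity and hence does not lie in the first quadrant of the affine real plane, so it is not a triangle point. Thus we may assume $n\geq 3$.

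For the main case, first I would write $nP = P + P + \cdots + P$ ($n$ summands), exhibiting $nP$ as a sum of an odd number of triangle points. By the iterated consequence recorded in the preceding theorem — the sum of a triangle point with a non-triangle point is a triangle point, and the sum of two triangle points is not, so by induction the sum of any odd number of triangle points is a triangle point — we conclude that $nP$ is a triangle point. On the other hand, since $P$ has order $n$, we have $nP = [1,-1,0]$, the identity element of the group law, which is a point at infinity and therefore not a triangle point. This contradiction shows that no triangle point can have odd order.

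There is no genuinely hard step remaining here: the substantive geometric input — that the first-quadrant component of $C_{r,s}$ is the boundary of a convex region, so that any line meets it in at most two points counting multiplicity — has already been used to establish the preceding theorem, and Corollary~\ref{odd} is a purely formal consequence. The only things to be careful about are bookkeeping: applying the theorem in its odd/even form requires at least one summand, which is why the case $n=1$ is treated separately, and one should observe that the same argument in fact shows more, namely that every triangle point has even order and, more generally, that no odd multiple of a triangle point equals the identity.
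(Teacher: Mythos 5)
Your argument is correct and is exactly the paper's reasoning: the preceding theorem says an odd sum of triangle points is a triangle point, so if a triangle point $P$ had odd order $n$, then $nP=[1,-1,0]$ would be a triangle point, contradicting that the identity lies at infinity. The paper states this in one line ("Since the identity $[1,-1,0]$ on $C_{r,s}$ is not a triangle point, we immediately obtain the following"), and your write-up merely makes the same deduction explicit, including the trivial $n=1$ case.
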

By a \emph{Pythagorean triple} we mean a set of 3 integers which together form the side
lengths of a right triangle. A Pythagorean triple $\ell_1$, $\ell_2$, $\ell_3$ is said to be \emph{primitive} if
there is no prime which divides every $\ell_i$. Every primitive Pythagorean triple is of the form $m^2-n^2$, $2mn$, $m^2+n^2$
where $m>n$ are relatively prime positive integers, exactly one of which is even; the inradius and semiperimeter of such a Pythagorean triple are given by $r = n(m-n)$ and $s = m(m+n)$, respectively.
\begin{thm}\label{rankthm}
Let $m>n$ be relatively prime positive integers. Suppose that $m$ is neither an odd perfect cube nor twice a perfect cube.
Then the curve $\mathcal{C}_{m,n} \mathrel{\mathop :}= C_{n(m-n), m(m+n)}$ corresponding to
the Pythagorean triple $m^2-n^2$, $2mn$, $m^2+n^2$ has the property
\begin{align*}
\rank_{\Z} (\mathcal{C}_{m,n}(\Q)) \geq 1.
\end{align*}
\end{thm}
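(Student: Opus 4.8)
The plan is to exhibit an explicit rational point on $\mathcal{C}_{m,n}(\Q)$ coming from the given Pythagorean triangle and then show it has infinite order by ruling out every possible finite order. The triangle with legs $m^2-n^2$, $2mn$ and hypotenuse $m^2+n^2$ has semiperimeter $s = m(m+n)$ and inradius $r = n(m-n)$, and by the construction in Section \ref{sectone} it determines the triangle point $(\a,\b) = (s-\ell_2, s-\ell_3)$ on $C_{r,s}$; concretely this is a rational point with positive coordinates. By Corollary \ref{odd} this point cannot have odd order, so it suffices to show it is not a $2$-torsion or $6$-torsion point, since (as noted in the text) a triangle point has order $2$ or $6$ precisely when the underlying triangle is isosceles. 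A right triangle $m^2-n^2, 2mn, m^2+n^2$ is never isosceles (the three side lengths are distinct for $m>n\geq 1$, since $m^2-n^2 = 2mn$ would force $(m-n)^2 = 2n^2$, impossible over $\Z$, and $m^2-n^2 < m^2+n^2$, $2mn < m^2+n^2$ by AM--GM with equality excluded). Hence the triangle point has even order $\geq 4$ and not equal to $6$.

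To finish, I would pass to the Weierstrass model $E_{r,s}\colon Y^2 + sXY + sr^2Y = X^3$ and use the standard structure theory of torsion. The image of our triangle point is $\bigl(-sr^2/\a,\, sr^2\b/\a\bigr)$, a rational point on $E_{r,s}(\Q)$. By Mazur's theorem the rational torsion subgroup is one of the fifteen allowed groups; having already excluded odd order and orders $2$ and $6$, the only finite possibilities left for the order of our point are $4$, $8$, $10$, or $12$. The cleanest way to kill these is a reduction-mod-$p$ argument: choose a prime $p$ of good reduction and compute $\#E_{r,s}(\F_p)$; if this cardinality is coprime to (say) a large enough set of small primes, the injection of the prime-to-$p$ torsion into $E_{r,s}(\F_p)$ forces the point to be non-torsion. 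The wrinkle — and this is where the hypothesis that $m$ is neither an odd perfect cube nor twice a perfect cube must enter — is that a uniform choice of $p$ need not work, so instead I expect the argument to track the $3$-torsion: the points $(0,0)$ and $(0,-sr^2)$ are rational $3$-torsion on $E_{r,s}$, and one shows that our triangle point is a rational $3$-division point of one of these (equivalently, that $9 \mid$ order would follow from a relation among $\a,\b,\c$) exactly when $s$ or $r^2$ is a perfect cube in a suitable sense; translating $s = m(m+n)$, $r^2 = n^2(m-n)^2$ into this condition and using $\gcd(m,n)=1$ reduces it to $m$ being an odd cube or twice a cube.

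\begin{rem}
Alternatively, one can avoid Mazur entirely: combine Corollary \ref{odd} with an explicit $2$-descent or a direct analysis of the $2$-power torsion on $E_{r,s}$ (the full $2$-torsion is visible over $\Q$ iff a certain discriminant is a square, which one checks fails here), reducing the finite-order cases to a single congruence condition on $m,n$ that is precisely the excluded one. In practice the reduction-mod-$p$ route is shortest for a write-up.
\end{rem}

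The main obstacle I anticipate is not the geometry — the triangle point and the exclusion of small torsion are routine given the tools already developed — but rather isolating the exact arithmetic condition on $m$ under which the triangle point could be torsion of order a multiple of $3$, and showing it matches ``$m$ an odd perfect cube or twice a perfect cube.'' This requires carefully factoring $s$ and $r^2$ using $\gcd(m,n)=1$ and the parity constraint on $m,n$, and relating cube-freeness of these quantities to the $3$-divisibility of the triangle point in $E_{r,s}(\Q)$; getting the ``only if'' direction tight (so that the stated hypothesis is exactly what is needed) is the delicate part.
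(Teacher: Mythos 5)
Your setup agrees with the paper's: the Pythagorean triangle furnishes a rational triangle point on $\mathcal{C}_{m,n}$, and Corollary \ref{odd} rules out odd order. But the proof does not close. After excluding orders $2$ and $6$ via the isosceles criterion, you are left with the possible orders $4$, $8$, $10$, $12$, and these are only addressed by two unexecuted sketches: a reduction-mod-$p$ argument with no specified prime and no mechanism by which the hypothesis on $m$ would enter, and a ``track the $3$-torsion'' idea which cannot work even in principle for the orders $4$, $8$, $10$ since they are prime to $3$. Your own closing remarks concede that you do not know how the cube condition on $m$ is used; in fact your guess (that it governs $3$-divisibility of the point) is off target --- the hypothesis has nothing to do with $3$-torsion.

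The missing idea, which is the heart of the paper's short proof, is this: if the triangle point were torsion, its order would be \emph{even} by Corollary \ref{odd}, and then a suitable multiple of it would be a rational point of exact order $2$ on $\mathcal{C}_{m,n}$. So it suffices to show the curve has \emph{no} rational $2$-torsion at all, which kills every even order (4, 8, 10, 12, and anything else) in one stroke, with no appeal to Mazur, reduction mod $p$, or descent. A point of order $2$ is fixed by negation $(x,y)\mapsto(y,x)$, hence of the form $(x,x)$ with $s(x^2-r^2)=2x^3$; the rational root test gives $x=t/2$ with $t\in\Z$, so $t^3=s(t+2r)(t-2r)$ where $s=m(m+n)$ and $r=n(m-n)$. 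Since $r$ and $m+n$ are prime to $m$ (as $\gcd(m,n)=1$), comparing the exact power of each odd prime $p\mid m$ on both sides forces that power to be a multiple of $3$, and a separate $2$-adic count (using that $r$ is odd and $t$ is even when $m$ is even) forces $\ord_2(m)\equiv 1\pmod 3$; together these say $m$ is an odd perfect cube or twice a perfect cube, exactly the excluded cases. Isolating this elementary valuation argument is what your proposal is missing; the isosceles step and the Mazur case analysis then become unnecessary.
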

\begin{proof}
By Corollary \ref{odd}, it suffices to show that the condition on $m$ implies $\mathcal{C}_{m,n}$ cannot have a rational point of order
two. We do this by contradiction. Suppose $P$ is a rational of order two on $\mathcal{C}_{m,n}$. Then $P = (x,x)$
for some $x\in \Q$, so $s(x^2-r^2) = 2x^3$. By the rational root test, $x = t/2$ for some $t\in\Z$. Hence $t^3 = s(t+2r)(t-2r)$. Let $p$ be an odd prime dividing $m$.
Then $p$ also divides $t^3$, so the exact power of $p$ dividing $t^3$ is $3k$ for some positive integer $k$. Note that
both $r = n(m-n)$ and $s/m= m+n$ are relatively prime to $m$, so the exact power of $p$ dividing $m$ is also $3k$. Since $m$
is not an odd perfect cube, this forces $m=2^ij^3$ where $j$ is an odd integer and $i$ is a positive integer. On the other hand, $r$ is odd and $t$ is even since $m$ is even, so $3\cdot \ord_2(t) = i + \ord_2(t+2r) + \ord_2(t-2r) = i+2$. Therefore $i\equiv -2 \equiv 1 \pmod{3}$,
which contradicts the assumption that $m$ is not twice a perfect cube.
\end{proof}
\begin{exam}
Consider the familiar right triangle with side lengths $3$, $4$, $5$. The semiperimeter is $s = (3+4+5)/2 = 6$ and the inradius is $r=(6-3)(6-4)(6-5)/6=1$. We can take $\a = (3+4-5)/2 = 1$ and $\b = (-3+4+5)/2= 3$, which yields a point $(1,3)$ on $C_{1,6}$. This point maps to the point $P=(-6/1, 6\cdot 3/1) = (-6, 18)$ on the elliptic curve $E_{1,6}$. We can compute $3P = (-35/9, 343/27)$ in $E_{1,6}(\Q)$ either by hand or with SAGE, which comes from the point $(x_0,y_0)=(54/35, 49/15)$ on $C_{1,6}$. Thus we can construct a rational triangle with lengths $x_0+ y_0 = 101/21$,  $s-x_0 = 156/35$, and $s-y_0 = 41/15$. This produces a non-right triangle which also has inradius $1$ and semiperimeter $6$. In particular, we have two non-congruent triangles with rational side lengths having the same perimeter $12$ and area $6$. We should note that it is not \emph{a priori} obvious why rational Euclidean triangles having the same area and perimeter are not necessarily congruent (see \cite{Rose} for further discussion of this point).
\begin{figure}
\includegraphics[height=10cm]{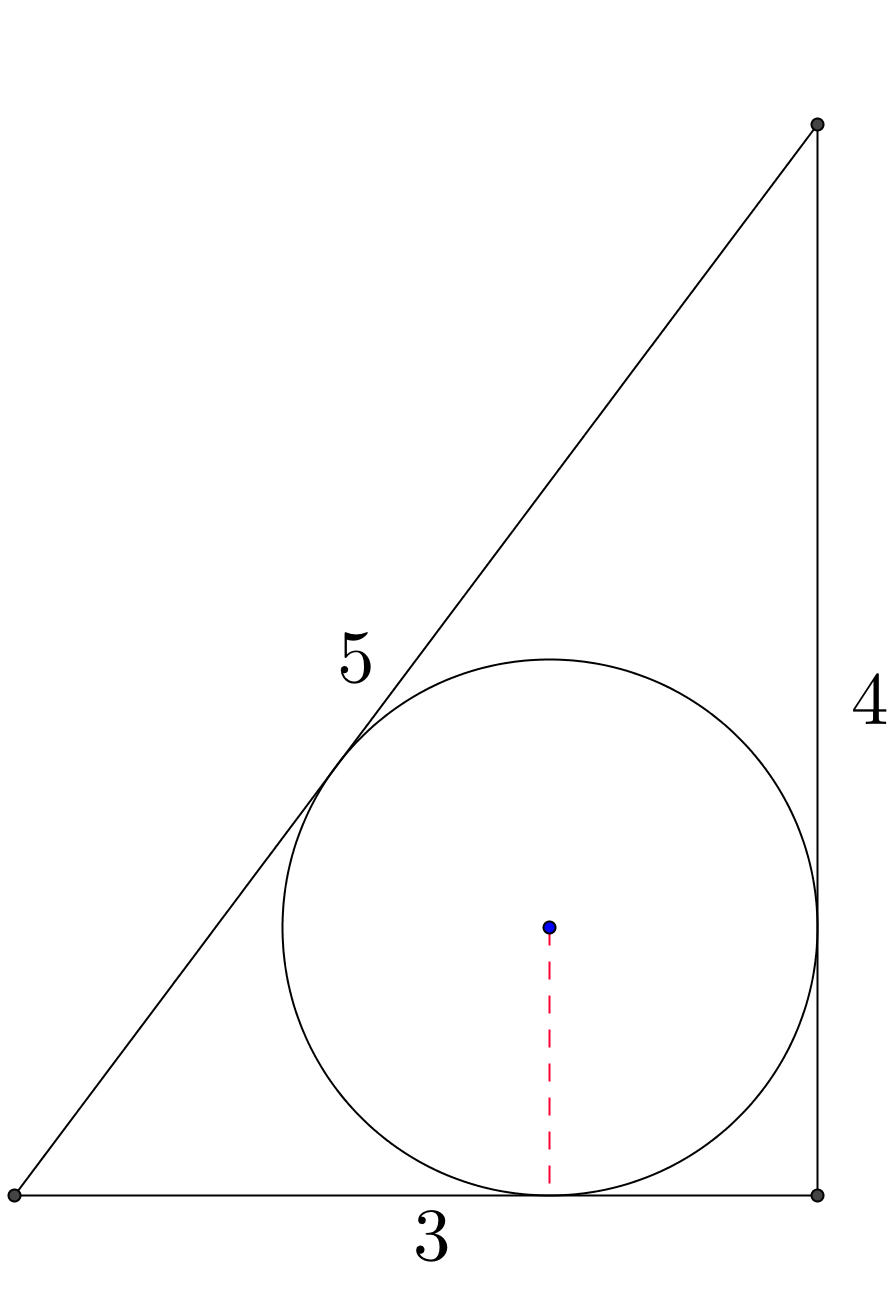}
\hspace{.5 in}
\includegraphics[height=10cm]{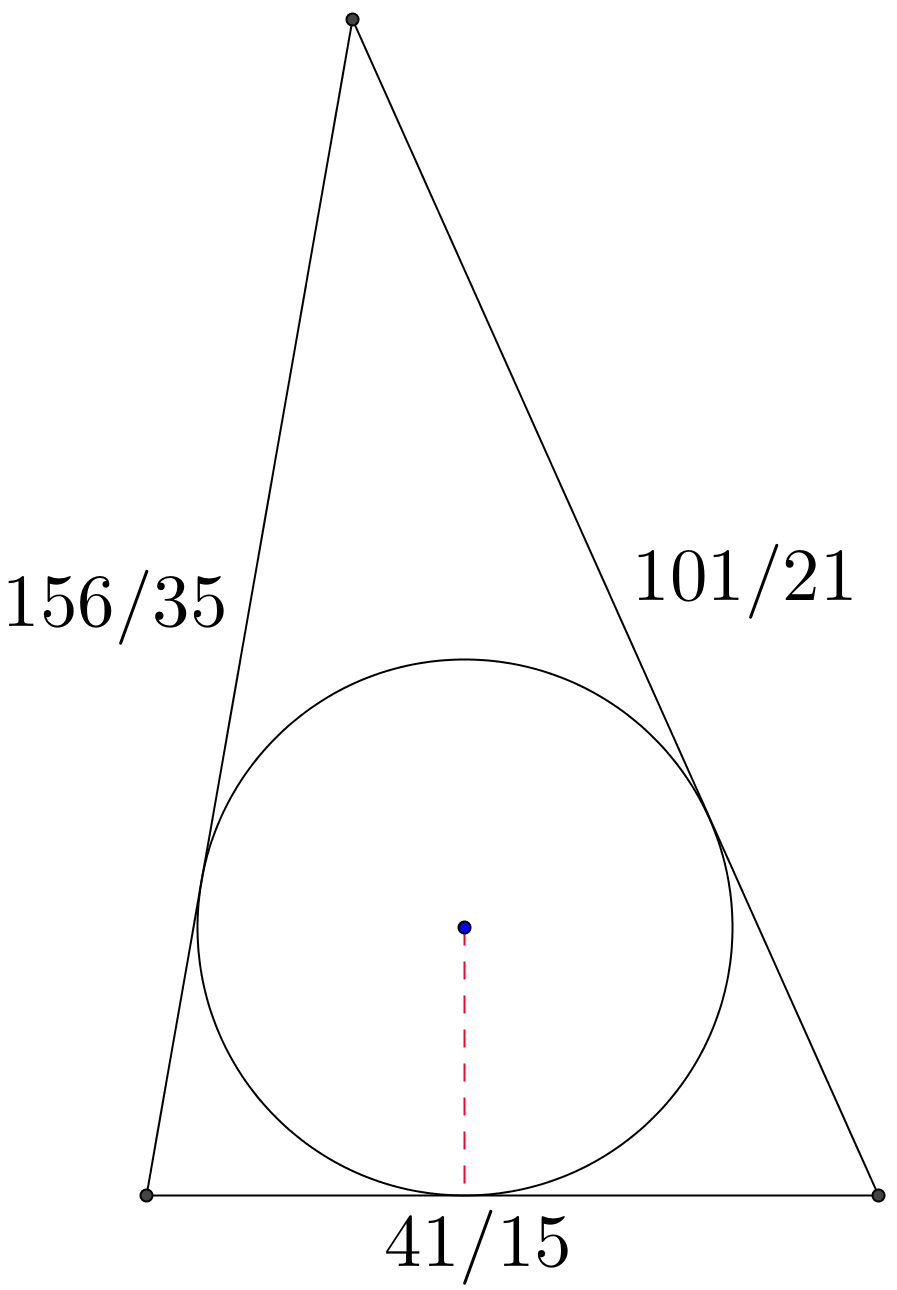}
\end{figure}
\noindent In fact, $E_{r,s}(\Q)$ has rank $1$ with $E_{r,s}(\Q) = \Z\cdot P$ as checked by SAGE, and we can generate arbitrarily many such examples in the same way by taking odd multiples of $P$. 
\end{exam}

\section{Hyperbolic Triangles and Quartic Curves}

Consider now a hyperbolic triangle $\Delta$ in the Poincar\'{e} upper half-plane, so each side is either a vertical line segment or the arc of a circle whose center lies on the boundary axis. Again, the angle bisectors are concurrent in a point called the incenter, and the hyperbolic distances from the sides of the triangle to the incenter are all equal, so this determines an inscribed circle, called the incircle. Note that a hyperbolic circle is a Euclidean circle, but the hyperbolic center is not the Euclidean center. We define the inradius $r$ to be the hyperbolic radius of the incircle. The inradii joining the incenter to the three intersection points of the incircle with $\Delta$ determine three central angles $\alpha$, $\beta$, $\gamma$, and three partial side lengths $\a$, $\b$, $\c$, as in Figure \ref{H} below.

\begin{figure}
\begin{center}
\includegraphics[scale=0.25]{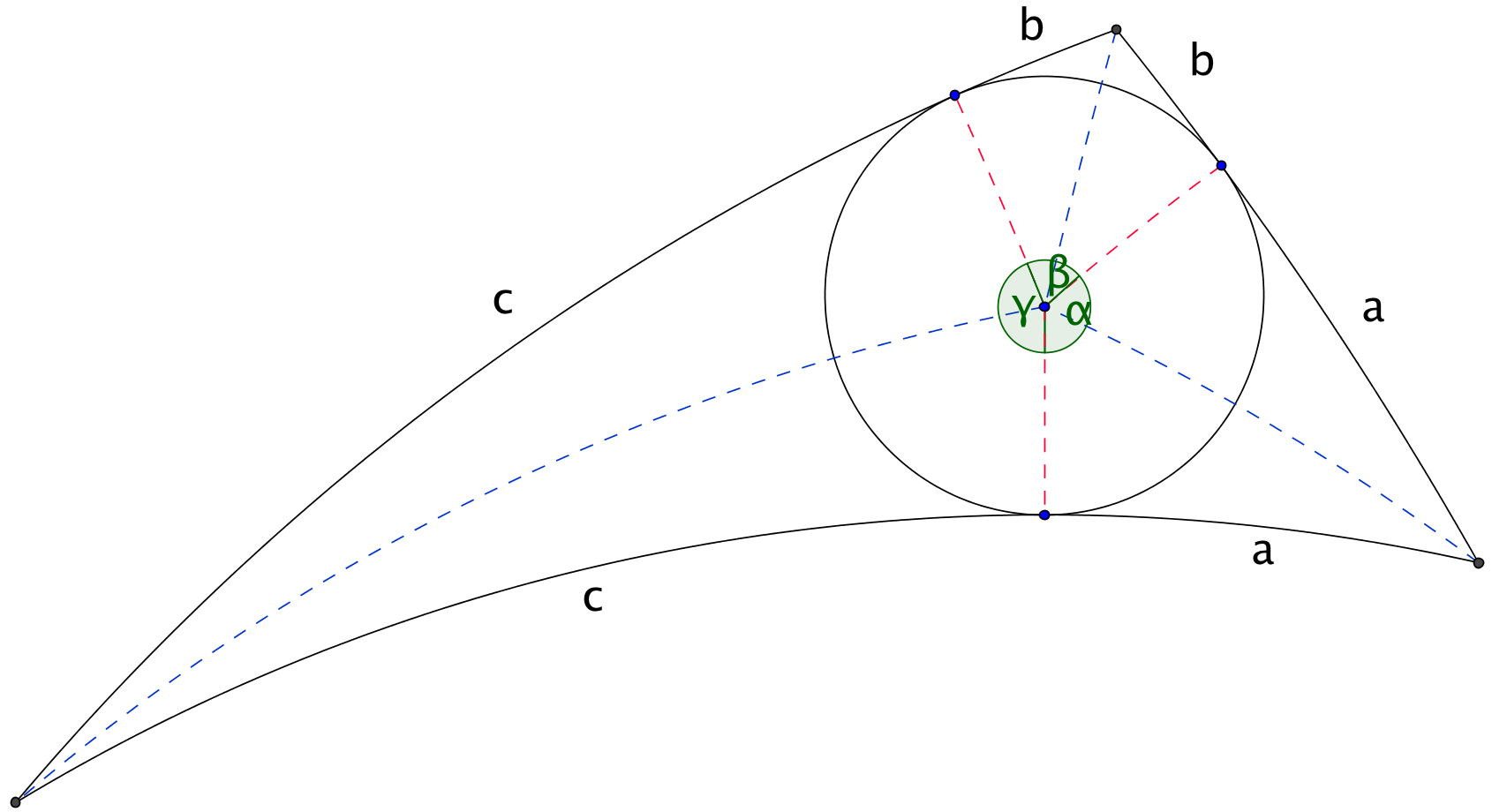}
\end{center}
\caption{Hyperbolic Triangle with Angle Bisectors, Incircle, and Inradii}
\label{H}
\end{figure}
The angle bisectors also bisect the central angles, so
\begin{align*}
\tanh(\a) = \rho\tan(\alpha/2) && \tanh(\b) = \rho\tan(\beta/2) && \tanh(\c) = \rho\tan(\gamma/2)
\end{align*}
where $\rho = \sinh(r)$. For convenience we define the notation
\begin{align*}
\tilde{u} \mathrel{\mathop:}= \tanh(u) \, \, \, \, \mbox{for all } u\in \R,
\end{align*}
so, similar to the Euclidean case, the Triple Tangent Identity implies
\begin{align*}
\tilde{\c} = \frac{\rho^2(\tilde{\a}+\tilde{\b})}{\tilde{\a}\tilde{\b}-\rho^2}.
\end{align*}
Thus using \ref{tanh} in Appendix \ref{A} shows the semiperimeter $s$ of $\Delta$ satisfies
\begin{align*}
\sigma \mathrel{\mathop:}= \tilde{s} = \tanh(\a+\b+\c) = \frac{ \tilde{\a}+\tilde{\b}+\tilde{\c}+\tilde{\a}\tilde{\b}\tilde{\c}}{1 + \tilde{\a}\tilde{\b}+\tilde{\b}\tilde{\c}+\tilde{\c}\tilde{\a}} = \cfrac{ \tilde{\a}+\tilde{\b}+(1+\tilde{\a}\tilde{\b})\cfrac{\rho^2(\tilde{\a}+\tilde{\b})}{\tilde{\a}\tilde{\b}-\rho^2}}{1 + \tilde{\a}\tilde{\b}+(\tilde{\b}+\tilde{\a})\cfrac{\rho^2(\tilde{\a}+\tilde{\b})}{\tilde{\a}\tilde{\b}-\rho^2}}
\end{align*}
or, equivalently,
\begin{align}\label{quartic}
 \sigma(\tilde{\a}^2\tilde{\b}^2+\tilde{\a}\tilde{\b} + \rho^2(\tilde{\a}^2 + \tilde{\a}\tilde{\b} + \tilde{\b}^2-1)) = (1+\rho^2)(\tilde{\a}^2\tilde{\b}+\tilde{\a}\tilde{\b}^2).
\end{align}
One can show that $\sigma \geq 3\sqrt{3}\rho\frac{1+\rho^2}{1+9\rho^2}$ with equality only for an equilateral triangle. Suppose $\Delta$ has side lengths $\ell_1,\ell_2, \ell_3$. Then, without loss of generality,
\begin{align*}
\a + \b= \ell_1  &&  \b+ \c = \ell_2 &&  \c+ \a = \ell_3,
\end{align*}
so
\begin{align*}
\tilde{\ell_1} = \frac{\tilde{\a}+\tilde{\b}}{1+\tilde{\a}\tilde{\b}} && \tilde{\ell_2} = \frac{\tilde{\b}+\tilde{\c}}{1+\tilde{\b}\tilde{\c}} &&  \tilde{\ell_3} =   \frac{\tilde{\c}+\tilde{\a}}{1+\tilde{\c}\tilde{\a}}.
\end{align*}
Therefore Eq. \ref{quartic} implies that we get a point
\begin{align}\label{point}
(\tilde{\a},\tilde{\b}) = \left(\frac{\sigma - \tilde{\ell}_2}{1-\sigma\tilde{\ell}_2}, \frac{\sigma - \tilde{\ell}_3}{1-\sigma\tilde{\ell}_3}\right) 
\end{align}
on the quartic curve
\begin{align}\label{heq}
Q_{\rho,\sigma}\colon \sigma(x^2y^2+xy + \rho^2(x^2 + xy + y^2-1)) = (1+\rho^2)(x^2y+xy^2)
\end{align}
where
\begin{align}\label{sigma}
\sigma = \frac{1 + \tilde{\ell}_1\tilde{\ell}_2 + \tilde{\ell}_2\tilde{\ell}_3+\tilde{\ell}_3\tilde{\ell}_1 + \sqrt{(1-\tilde{\ell}_1^2)(1-\tilde{\ell}_2^2)(1-\tilde{\ell}_3^2)}}{\tilde{\ell}_1 + \tilde{\ell}_2 + \tilde{\ell}_3+\tilde{\ell}_1\tilde{\ell}_2\tilde{\ell}_3},
\end{align}
\begin{align}\label{rho}
 \rho^2 = \left( \frac{(1-\sigma\tilde{\ell}_2)(1-\sigma\tilde{\ell}_3)}{(\sigma - \tilde{\ell_2})(\sigma - \tilde{\ell_3})} + \frac{(1-\sigma\tilde{\ell}_1)(1-\sigma\tilde{\ell}_3)}{(\sigma - \tilde{\ell_1})(\sigma - \tilde{\ell_3})} + \frac{(1-\sigma\tilde{\ell}_1)(1-\sigma\tilde{\ell}_2)}{(\sigma - \tilde{\ell_1})(\sigma - \tilde{\ell_2}))}\right)^{-1}.
\end{align}
Here the formula for $\sigma$ comes from the identity
\begin{align*}
 \frac{ \tilde{\ell}_1+\tilde{\ell}_2+\tilde{\ell}_3+\tilde{\ell}_1\tilde{\ell}_2\tilde{\ell}_3}{1 + \tilde{\ell}_1\tilde{\ell}_2+\tilde{\ell}_2\tilde{\ell}_3+\tilde{\ell}_3\tilde{\ell}_1} = \tanh(\ell_1+\ell_2 + \ell_3) = \tanh(2s) = \frac{2\sigma}{1+\sigma^2},
\end{align*}
and the formula for $\rho^2$ comes from
\begin{align*}
\frac{1}{\rho^2} = \frac{\tilde{\a}+\tilde{\b}+\tilde{\c}}{\tilde{\a}\tilde{\b}\tilde{\c}} = \frac{1}{\tilde{\b}\tilde{\c}} + \frac{1}{\tilde{\a}\tilde{\c}} + \frac{1}{\tilde{\a}\tilde{\b}}.
\end{align*}
The graph of $Q_{\rho,\sigma}$ in the affine real plane has five components (see Figure \ref{qrs}).
The point $(\tilde{\a},\tilde{\b})$ from $\Delta$ is, of course, on the component contained in $(0,1)\times(0,1)$. 
\begin{figure}
\begin{center}
\includegraphics[width=13 cm]{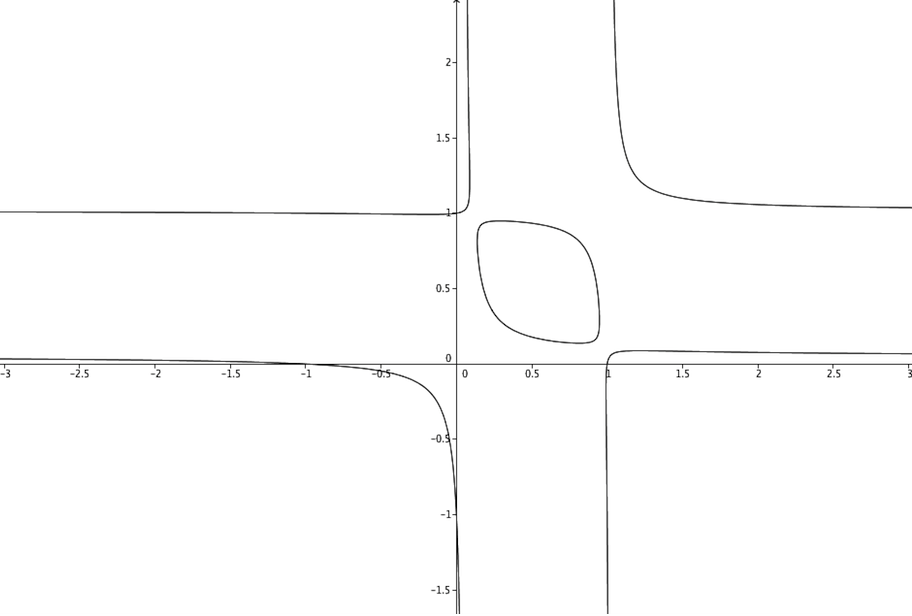}
\end{center}
\caption{Graph of $Q_{\rho,\sigma}$ in the affine real plane}
\label{qrs}
\end{figure}
As in the Euclidean case, we get up to six points on $Q_{\rho,\sigma}$, namely, $(\tilde{\a},\tilde{\b})$, $(\tilde{\b},\tilde{\c})$, $(\tilde{\c},\tilde{\a})$ and the transposes $(\tilde{\b},\tilde{\a})$, $(\tilde{\c},\tilde{\b})$, $(\tilde{\a},\tilde{\c})$. We homogenize Eq. \ref{heq} to view $Q_{\rho,\sigma}$ as a curve in $\P^2(\C)$ with projective coordinates $[x,y,z]$; the points at infinity $[1,0,0]$ and $[0,1,0]$ are the only singular points. These singular points both have multiplicity $2$, so the genus of $Q_{\rho, \sigma}$ is
\begin{align*}
\frac{(4-1)(4-2)}{2} - \frac{2(2-1)}{2} - \frac{2(2-1)}{2} = 3 - 1 - 1 = 1.
\end{align*}
There is a natural change of coordinates $(X,Y) = (xy, x+y)$ so that $\alpha^2-Y\alpha + X = (\alpha-x)(\alpha - y)$. Thus if $X$, $Y$ are in a subfield $F$ of $\C$, then $x,y$ will be in $F$ precisely when $Y^2-4X = Z^2$ for some $Z\in F$. This allows us to view our curve as the intersection of two quadric surfaces
\begin{align}\label{intersection}
H_{\rho,\sigma}\colon 
\left\{\begin{array}{l}
\sigma(X^2+X+\rho^2(Y^2-X-1))-(\rho^2+1)XY=0\\
4X-Y^2+Z^2=0
\end{array}\right.
\end{align}
\begin{figure}
\begin{center}
\includegraphics[width=16 cm]{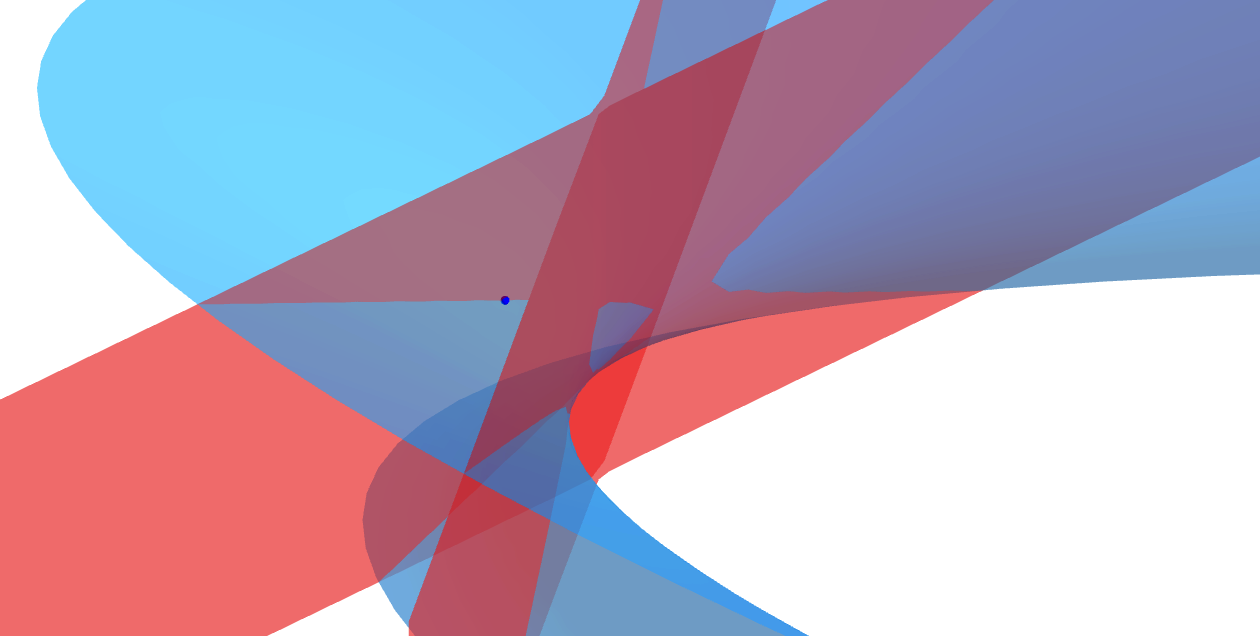}
\end{center}
\caption{Graph of $H_{\rho,\sigma}$ with base point $(-1,0,2)$}
\label{six}
\end{figure}
Thus the curve can be visualized, as in Figure \ref{six}, as the intersection of a hyperbolic cylinder (red) and a hyperbolic paraboloid (blue).
We homogenize Eq. \ref{intersection} and view $H_{\rho,\sigma}$ as a curve in $\P^3(\C)$ with projective coordinates $[X,Y,Z,T]$. There is a geometric way of combining points on $H_{\rho,\sigma}$ with respect to the point $\OO = [-1,0,2,1]$: for points $P$, $Q$, we take $P + Q$ to be the fourth intersection point of $H_{\rho,\sigma}$ with the plane spanned by $\OO$ and the secant/tangent line for $P$, $Q$. See \cite{Wang} for a more detailed exposition of curves arising from the intersections of two quadric surfaces.

We say that $\Delta$ is \emph{rational} if $\tilde{\ell}_1, \tilde{\ell}_2, \tilde{\ell}_3\in\Q$. Note that $\Delta$ being
rational does not imply $\sigma\in\Q$ since, e.g., taking $\tilde{\ell}_1 = 1/2 = \tilde{\ell}_2$ and $\tilde{\ell}_3=1/3$ gives $\sigma = \frac{19+ 6\sqrt{2}}{17}\notin\Q$ by Equation \ref{sigma}.
However, if $\Delta$ is a rational right hyperbolic triangle with hypotenuse $\ell_3$, then recall that we have the following hyperbolic Pythagorean identity
\begin{align*}
\tilde{\ell}_3^2 = \tilde{\ell}_1^2+\tilde{\ell}_2^2-\tilde{\ell}_1^2\tilde{\ell}_2^2,
\end{align*}
so $1-\tilde{\ell}_3^2 = (1-\tilde{\ell}_1^2)(1-\tilde{\ell}_2^2)$ which combined with Equation \ref{sigma} implies
\begin{align*}
\sigma = \frac{1 + \tilde{\ell}_1\tilde{\ell}_2 + \tilde{\ell}_2\tilde{\ell}_3+\tilde{\ell}_3\tilde{\ell}_1 + 1-\tilde{\ell}_3^2}{\tilde{\ell}_1 + \tilde{\ell}_2 + \tilde{\ell}_3+\tilde{\ell}_1\tilde{\ell}_2\tilde{\ell}_3}\in\Q.
\end{align*}
In any case, it is clear from Eq.s \ref{point} and \ref{rho} that if $\Delta$ is rational and $\sigma\in\Q$, then we must also have $\tilde{a}, \tilde{b}, \tilde{c}, \rho^2\in\Q.$
\begin{thm}
Suppose $\sigma,\rho^2\in\Q$ with $\sigma \geq 3\sqrt{3}\rho\frac{1+\rho^2}{1+9\rho^2} > 0$. Then the assignment
\begin{align*}
(x_0,y_0) \mapsto \left(\tanh^{-1}\left(\frac{x_0+y_0}{1+x_0y_0}\right), \tanh^{-1}\left(\frac{\sigma-x_0}{1-\sigma x_0}\right), \tanh^{-1}\left(\frac{\sigma-y_0}{1-\sigma y_0}\right)\right)
\end{align*}
is a bijection from the rational points $(x_0,y_0)$ on $Q_{\rho,\sigma}$ having $1 > x_0, y_0 > 0$ to the sequences of side lengths of rational hyperbolic triangles having semiperimeter $s = \tanh^{-1}(\sigma)$ and inradius $r = \sinh^{-1}(\rho)$.
\end{thm}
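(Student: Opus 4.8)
The plan is to mimic, step for step, the proof of the Euclidean version, with the affine sum $x\mapsto x+y$ replaced throughout by the hyperbolic tangent addition formula and its inverse. We use that $\sigma\in(0,1)$, so that $s:=\tanh^{-1}\sigma$ is a positive real, that $\tanh^{-1}$ maps $(0,1)$ into $(0,\infty)$, and that $\tanh^{-1}(\tanh v)=v$ for every real $v$. For surjectivity, begin with a rational hyperbolic triangle of side lengths $\ell_1,\ell_2,\ell_3$, semiperimeter $s=\tanh^{-1}\sigma$, and inradius $r=\sinh^{-1}\rho$, and form its partial side lengths $\a=s-\ell_2$, $\b=s-\ell_3$, $\c=s-\ell_1$, which are positive because each $\ell_i<s$. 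As shown in the derivation of Eq.~\ref{heq}, the point $(\tilde{\a},\tilde{\b})$ lies on $Q_{\rho,\sigma}$; it lies in $(0,1)^2$ because $\a,\b>0$; and it is a rational point because $\tilde{\a}=(\sigma-\tilde{\ell}_2)/(1-\sigma\tilde{\ell}_2)$ and $\tilde{\b}=(\sigma-\tilde{\ell}_3)/(1-\sigma\tilde{\ell}_3)$ with $\sigma,\tilde{\ell}_i\in\Q$, cf.\ Eq.~\ref{point}. Feeding $(\tilde{\a},\tilde{\b})$ into the stated assignment and simplifying with the tangent addition and subtraction laws returns $(\a+\b,\,s-\a,\,s-\b)=(\ell_1,\ell_2,\ell_3)$, so the map is onto.

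For the reverse direction, take a rational point $(x_0,y_0)$ on $Q_{\rho,\sigma}$ with $0<x_0,y_0<1$ and put $\a=\tanh^{-1}x_0>0$, $\b=\tanh^{-1}y_0>0$. The addition and subtraction laws identify the image triple with $(\a+\b,\,s-\a,\,s-\b)$, whose entries sum to $2s$ and are rational hyperbolic tangents since $x_0,y_0,\sigma\in\Q$. For this triple to be the side-length sequence of a hyperbolic triangle it suffices to verify the three strict triangle inequalities; two of them, $(\a+\b)+(s-\a)>s-\b$ and $(\a+\b)+(s-\b)>s-\a$, are immediate, while the third, $(s-\a)+(s-\b)>\a+\b$, is precisely $\a+\b<s$, that is, $(x_0+y_0)/(1+x_0y_0)<\sigma$. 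This is the heart of the argument: solving the equation of $Q_{\rho,\sigma}$ for $\sigma$ and simplifying, $(x_0+y_0)/(1+x_0y_0)<\sigma$ becomes, after cancelling the positive factor $\rho^2$, the statement $x_0^2y_0^2>x_0^2+y_0^2-1$, equivalently $(1-x_0^2)(1-y_0^2)>0$, which holds since $0<x_0,y_0<1$. It is standard that a triple of positive reals satisfying the strict triangle inequalities is realized by a hyperbolic triangle, unique up to congruence, whose partial lengths are $\a$, $\b$, and $\c:=s-\a-\b$ and whose semiperimeter is $s$; and reversing the derivation of Eq.~\ref{heq} shows that the relation cutting out $Q_{\rho,\sigma}$ forces $\rho^2=\tilde{\a}\tilde{\b}\tilde{\c}/(\tilde{\a}+\tilde{\b}+\tilde{\c})$ with $\tilde{\c}=\tanh(s-\a-\b)$, which is exactly the square of the hyperbolic sine of the inradius of that triangle (the reciprocal of the identity recorded just after Eq.~\ref{rho}). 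Hence the image is a side-length sequence of a rational hyperbolic triangle of semiperimeter $s$ and inradius $r$. Injectivity is then formal: if $(x_0,y_0)$ and $(x_0',y_0')$ have the same image $(\ell_1,\ell_2,\ell_3)$, then $\tilde{\ell}_2=(\sigma-x_0)/(1-\sigma x_0)$ and solving for $x_0$ gives $x_0=(\sigma-\tilde{\ell}_2)/(1-\sigma\tilde{\ell}_2)=x_0'$, and likewise $y_0=y_0'$.

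The only real obstacle is the inequality $\a+\b<s$ needed for the third triangle inequality; everything else is bookkeeping with the addition law and with the defining equation of $Q_{\rho,\sigma}$. The pleasant point is that this inequality collapses, via the curve equation, to the trivial $(1-x_0^2)(1-y_0^2)>0$ on the open unit square --- the exact hyperbolic analogue of the identity $s/(x_0+y_0)=x_0y_0/(x_0y_0-r^2)>1$ used in the Euclidean case. A secondary point, requiring only a short computation, is confirming from the curve equation that the reconstructed triangle has inradius $\sinh^{-1}\rho$ and not merely semiperimeter $s$.
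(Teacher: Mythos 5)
Your proof is correct and follows essentially the same route as the paper: surjectivity comes from the construction, the only nontrivial triangle inequality $\a+\b<s$ is reduced via the curve equation to $(1-x_0^2)(1-y_0^2)>0$ (the paper's equivalent chain shows $\sigma(1+x_0y_0)/(x_0+y_0)>1$ using $(1-x_0)(1-y_0)>0$), and injectivity is formal. Your extra verification that the reconstructed triangle also has inradius $\sinh^{-1}(\rho)$ is a point the paper leaves implicit, but it does not change the approach.
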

\begin{proof}
We have already seen from the above construction that sequences of side lengths of rational hyperbolic triangles give rise to rational points on $Q_{\rho,\sigma}$ in $(0,1)\times (0,1)$; in particular, the map is onto. Conversely, suppose $(x_0,y_0)$ is a rational point on $Q_{\rho,\sigma}$ in $(0,1)\times (0,1)$. Then we claim
\begin{align*}
\left(\tanh^{-1}\left(\frac{x_0+y_0}{1+x_0y_0}\right), \tanh^{-1}\left(\frac{\sigma-x_0}{1-\sigma x_0}\right), \tanh^{-1}\left(\frac{\sigma-y_0}{1-\sigma y_0}\right)\right) \\
= (\tanh^{-1}(x_0) + \tanh^{-1}(y_0), s - \tanh^{-1}(x_0), s - \tanh^{-1}(y_0) )
\end{align*}
is a sequence of side lengths of a rational hyperbolic triangle. We have the inequalities
\begin{align*}
(\tanh^{-1}(x_0)+\tanh^{-1}(y_0) ) + (s-\tanh^{-1}(x_0)) = s + \tanh^{-1}(y_0) > s - \tanh^{-1}(y_0) \\
(\tanh^{-1}(x_0)+\tanh^{-1}(y_0) ) + (s-\tanh^{-1}(y_0)) = s + \tanh^{-1}(x_0) > s-\tanh^{-1}(x_0). 
\end{align*}
Also,
\begin{align*}
0>(x_0-1)(1-y_0)  = x_0+y_0-(x_0y_0+1),
\end{align*}
so $(x_0y_0+1) > (x_0+y_0)^2/(1+x_0y_0)$ and
\begin{align*}
1<\frac{(1+\rho^2)x_0y_0}{\left(x_0y_0 +\rho^2 \left( \frac{(x_0+y_0)^2}{1+x_0y_0} -1\right)\right)} = \frac{\sigma(1+x_0y_0)}{x_0+y_0}.
\end{align*}
Thus
\begin{align*}
&\hspace{0.2 in} (s-\tanh^{-1}(x_0)) + (s-\tanh^{-1}(y_0)) \\
&= 2\tanh^{-1}(\sigma)-(\tanh^{-1}(x_0) +\tanh^{-1}(y_0)) \\
&>2\tanh^{-1}\left(\frac{x_0+y_0}{1+x_0y_0}\right) -(\tanh^{-1}(x_0) +\tanh^{-1}(y_0))\\
&=2(\tanh^{-1}(x_0) +\tanh^{-1}(y_0)) - (\tanh^{-1}(x_0) +\tanh^{-1}(y_0)) \\
&=\tanh^{-1}(x_0) +\tanh^{-1}(y_0).
\end{align*}
It remains only to note that $(\tanh^{-1}(x_0)+\tanh^{-1}(y_0), s-\tanh^{-1}(x_0), s-\tanh^{-1}(y_0)) =(\tanh^{-1}(x'_0)+\tanh^{-1}(y'_0), s-\tanh^{-1}(x'_0), s-\tanh^{-1}(y'_0)) \Rightarrow (x_0, y_0) =(x_0', y_0')$.
\end{proof}
\begin{exam}
There is a construction, discussed in \cite{Hart2} for instance, which allows one to write down rational\footnote{As mentioned in the introduction, the notion of a rational hyperbolic triangle in Hartshorne and van Luijk's article \cite{Hart2} is a stronger notion than ours; namely, they require that the side lengths are logs of rational numbers, whereas we only require that the hyperbolic tangents of the side length are rational numbers or, equivalently, that the side lengths are the logs of square roots of rational numbers.} hyperbolic Pythagorean triples. For example, there is a hyperbolic right triangle with side lengths $\tanh^{-1}(672/697)$, $\tanh^{-1}(104/185)$, $\tanh^{-1}(40/41)$. We have $\sigma = 312/317$ and $\rho^2 = 112^2/242201$ in this case by Equations \ref{sigma} and \ref{rho}, so by Equation \ref{point} we get a rational point
\begin{align*}
(\tilde{\a},\tilde{\b}) = \left(\frac{1456}{1541}, \frac{112}{517} \right)
\end{align*}
on the rational quartic $Q_{\rho, \sigma}$. This gives us a rational point
\begin{align*}
P=\left(\frac{163072}{796697}, \frac{925344}{796697}, \frac{580160}{796697}\right)
\end{align*}
in the affine patch $(X,Y,Z) = [X,Y,Z,1]$ of $H_{\rho,\sigma}$. In order to compute $P + P$ on $H_{\rho,\sigma}$, we first compute the tangent line at $P$ by intersecting the tangent planes at $P$ on the two surfaces defining $H_{\rho,\sigma}$. This tangent line at $P$ is parallel to a vector $v$ obtained by taking a cross product of normal vectors of the tangent planes. We find
\begin{align*}
v \times (P-\OO) =  \left(315005821528688640, -\frac{1615149902619671040}{11}, \frac{1807175612011392000}{11}\right),
\end{align*}
which is the normal vector of the plane determined by the point $\OO = (-1,0,2)$ and the tangent line for $P$. We find the fourth intersection of this plane with $H_{\rho,\sigma}$ to be
\begin{align*}
P + P = \left(\frac{157101469162847924}{6671549185609843471}, -\frac{3620500406298490680}{6671549185609843471}, \frac{2985897265044714172}{6671549185609843471}\right).
\end{align*}
This gives us a rational point on $Q_{\rho,\sigma}$, but does not correspond to a triangle, of course, since the point has negative $y$-coordinate. However, we can now compute $P + (P + P)$. We compute $((P + P)-\OO) \times (P-\OO) =$
\begin{align*}
\left(\frac{13252388009067818908542000}{5315203221527805463815287},
-\frac{274681775539499477051700}{483200292866164133074117},
\frac{208376488637078243567400}{113089430245272456676921}\right),
\end{align*}
which is the normal vector for the plane determined by the point $\OO$ and the secant line for $P$, $P + P$. The fourth intersection point of this plane with $H_{\rho,\sigma}$ has coordinates
\begin{align*}
& \left(\frac{5263691075333761370098794452493746497879860068608}{16384416048645387501313685205281020081739448108365},\right. \\
& \hspace{0.17 in} \frac{21085154660473767875635316117377007639096311317792}{16384416048645387501313685205281020081739448108365}, \\
& \hspace{0.15 in}  \left. \frac{9980667760067897618538531202408325397579645579328}{16384416048645387501313685205281020081739448108365}\right).
\end{align*}
This point does indeed give a point on $Q_{\rho,\sigma}$ in the first quadrant with coordinates smaller than $1$:
\begin{align*}
(x_0,y_0) = \left(\frac{2072869433189638375660592}{2186502887201310556520693}, \frac{2539325917520154646338224}{7493434444816664924429305} \right)
\end{align*}
This point corresponds to a non-right rational hyperbolic triangle with semiperimeter $s = \tanh^{-1}(312/317)$ and inradius $r = \sinh^{-1}(112/\sqrt{242201})$, namely, the triangle with side lengths
\begin{align*}
&\tanh^{-1}\left(\frac{4938503954557916283489312}{5070357052721862942058853} \right), \\
&\tanh^{-1}\left(\frac{25089290485693528550048552}{46386152087648273210954977} \right),\\
&\tanh^{-1}\left(\frac{1532985230928910433532726152}{1583149032740594531386563797}\right).
\end{align*}
This example would have been very difficult to find by hand, i.e., without exploiting the elliptic curve structure.
\end{exam}

\appendix

\section{Two Trigonometric Identities}\label{A}

The key identity we used in deriving our plane curves came from relations
among the tangents of half the central angles.
\begin{thm}[Triple Tangent Identity]\label{trip}
Suppose $\theta_1 + \theta_2 + \theta_3 = \pi$. Then

\begin{align*}
\tan(\theta_1) + \tan(\theta_2) + \tan(\theta_3) = \tan(\theta_1)\tan(\theta_2)\tan(\theta_3).
\end{align*}
\end{thm}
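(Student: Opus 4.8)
The plan is to obtain the identity directly from the angle-addition formula for tangent. First I would use the hypothesis to write $\theta_1 + \theta_2 = \pi - \theta_3$ and apply $\tan$ to both sides, using $\tan(\pi - x) = -\tan(x)$, to get $\tan(\theta_1+\theta_2) = -\tan(\theta_3)$. Expanding the left-hand side via $\tan(\theta_1+\theta_2) = \frac{\tan\theta_1 + \tan\theta_2}{1 - \tan\theta_1\tan\theta_2}$ and clearing denominators yields
\[
\tan\theta_1 + \tan\theta_2 = -\tan\theta_3\bigl(1 - \tan\theta_1\tan\theta_2\bigr) = -\tan\theta_3 + \tan\theta_1\tan\theta_2\tan\theta_3,
\]
and transposing $\tan\theta_3$ to the left gives exactly the claimed identity.

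The only point requiring attention is the domain of validity: the steps above presuppose that each $\tan\theta_i$ is finite (i.e.\ $\theta_i \ne \pi/2$) and that $\tan\theta_1\tan\theta_2 \ne 1$, the latter equivalent via the constraint to $\theta_3 \ne \pi/2$. I would note that these conditions hold automatically wherever the identity is applied in this paper: there $\theta_1,\theta_2,\theta_3$ are the half-central-angles $\alpha/2,\beta/2,\gamma/2$ of the incircle configuration of a genuine triangle, each lying strictly in $(0,\pi/2)$, so all three tangents are finite and positive; in the limiting case where some $\theta_i \to \pi/2$ both sides diverge together and the statement holds in the obvious sense.

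I do not anticipate any real obstacle, as the result is a one-line consequence of the tangent addition law. If a more symmetric derivation is preferred, an alternative is to start from $\sin(\theta_1+\theta_2+\theta_3) = \sin\pi = 0$, expand the left side as $\sin\theta_1\cos\theta_2\cos\theta_3 + \cos\theta_1\sin\theta_2\cos\theta_3 + \cos\theta_1\cos\theta_2\sin\theta_3 - \sin\theta_1\sin\theta_2\sin\theta_3$, and divide through by $\cos\theta_1\cos\theta_2\cos\theta_3$ (nonzero under the same hypotheses) to read off $\tan\theta_1 + \tan\theta_2 + \tan\theta_3 - \tan\theta_1\tan\theta_2\tan\theta_3 = 0$. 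I would present the first argument as the proof and, if desired, mention this second one as a remark.
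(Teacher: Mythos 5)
Your argument is correct and is essentially the paper's own proof: both apply $\tan$ to $\theta_1+\theta_2=\pi-\theta_3$, use $\tan(\pi-x)=-\tan(x)$ together with the addition law, and clear the denominator. Your extra remarks on the domain (all $\theta_i\neq\pi/2$) and the alternative derivation from $\sin(\theta_1+\theta_2+\theta_3)=0$ are fine additions but not needed beyond what the paper does.
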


\begin{proof}
Using oddness, $\pi$-periodicity, and the addition law, we get
\begin{align*}
-\tan(\theta_3) = \tan(\pi - \theta_3) = \tan(\theta_1+\theta_2) = \frac{\tan(\theta_1)+\tan(\theta_2)}{1-\tan(\theta_1)\tan(\theta_2)}.
\end{align*}
\end{proof}

In the hyperbolic case, the trigonometric formulas for right triangles involved hyperbolic tangents
of side lengths, so we needed to make use of an iterated addition law.

\begin{thm}\label{tanh}
For $A,B,C \in\R$, we have
\begin{align*}
\tanh(\tanh^{-1}(A) + \tanh^{-1}(B) + \tanh^{-1}(C)) = \frac{A+B +C + ABC}{1+AB+BC+CA}.
\end{align*}
\end{thm}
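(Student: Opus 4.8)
The plan is to iterate the standard two-term hyperbolic addition formula
\[
\tanh(u+v)=\frac{\tanh(u)+\tanh(v)}{1+\tanh(u)\tanh(v)},
\]
which itself follows immediately from writing $\tanh$ in terms of exponentials (or from the sum formulas for $\sinh$ and $\cosh$), in direct analogy with how the Triple Tangent Identity of Theorem \ref{trip} was derived from the ordinary tangent addition law.

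First I would set $u=\tanh^{-1}(A)+\tanh^{-1}(B)$ and $v=\tanh^{-1}(C)$, so that $\tanh(v)=C$ and, by a single application of the addition formula, $\tanh(u)=\frac{A+B}{1+AB}$. A second application of the formula then gives
\[
\tanh(u+v)=\cfrac{\cfrac{A+B}{1+AB}+C}{1+C\cdot\cfrac{A+B}{1+AB}}.
\]
Multiplying numerator and denominator through by $1+AB$ turns the right-hand side into $\dfrac{(A+B)+C(1+AB)}{(1+AB)+C(A+B)}=\dfrac{A+B+C+ABC}{1+AB+BC+CA}$, which is exactly the asserted expression.

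The computation is entirely routine, and the only points deserving a word of care are bookkeeping ones. Since $\tanh^{-1}$ is real-valued only on $(-1,1)$, the statement is to be read with $A,B,C\in(-1,1)$ (equivalently, one may take $\tanh^{-1}(A)=\tfrac12\log\frac{1+A}{1-A}$ and regard the identity as one between meromorphic functions, valid wherever both sides make sense). The intermediate step uses $1+AB\neq 0$, which is automatic when $|A|,|B|<1$, and the final equality is an identity of rational functions valid whenever $1+AB+BC+CA\neq 0$. None of this is a genuine obstacle: the entire content of the theorem is the two-fold iteration of the addition law followed by one clearing of denominators, so there is no hard step to single out.
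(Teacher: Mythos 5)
Your proposal is correct and follows essentially the same route as the paper: establish the two-term addition law $\tanh(u+v)=\frac{\tanh u+\tanh v}{1+\tanh u\tanh v}$ (the paper derives it from the exponential definition) and apply it a second time with $C$, then clear the denominator $1+AB$. Your added remark about the domain $A,B,C\in(-1,1)$ is a reasonable bookkeeping point that the paper leaves implicit, but it does not change the argument.
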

\begin{proof}
Use the definition of the hyperbolic tangent function $\tanh(z) = \displaystyle{\frac{e^{z}-e^{-z}}{e^{z}+e^{-z}} =  \frac{e^{2z}-1}{e^{2z}+1} }$ to get
\begin{align*}
\frac{\tanh(A)+\tanh(B)}{1+\tanh(A)\tanh(B)} 
&= \frac{(e^{2A}-1)(e^{2B}+1) +  (e^{2B}-1)(e^{2A}+1)}{(e^{2A}+1)(e^{2B}+1) + (e^{2A}-1)(e^{2B}-1)} \\
&=  \frac{2e^{2(A+B)}  -2}{2e^{2(A+B)} +2} = \tanh(A + B),
\end{align*}
so
\begin{align*}
\tanh(\tanh^{-1}(A) + \tanh^{-1}(B)) = \frac{A+B}{1+AB}.
\end{align*}
Thus
\begin{align*}
\tanh(\tanh^{-1}(A) + \tanh^{-1}(B) + \tanh^{-1}(C)) &= \frac{\tanh(\tanh^{-1}(A) + \tanh^{-1}(B)) +C }{1+ \tanh(\tanh^{-1}(A) + \tanh^{-1}(B))C} \\
&=  \frac{A+B +C(1+AB) }{1+AB+(A+B)C}.
\end{align*}
\end{proof}


\bibliographystyle{amsalpha}
\bibliography{References}

\end{document}